\LetLtxMacro\todonotestodo\todo
\renewcommand{\todo}[2][]{\todonotestodo[#1]{TODO: {#2}}}
\theoremstyle{definition}
\newtheorem{theorem}{Theorem}
\newtheorem{lemma}{Lemma}[section]
\newtheorem{proposition}[lemma]{Proposition}
\newtheorem{corollary}[lemma]{Corollary}
\newtheorem{fact}[lemma]{Fact}
\newtheorem{remark}[lemma]{Remark}
\newtheorem*{claim*}{Claim}
\newtheorem*{theorem*}{Theorem}
\newtheorem*{corollary*}{Corollary}
\newtheorem*{lemma*}{Lemma}
\newtheorem*{remark*}{Remark}
\newtheorem*{question*}{Question}
\newtheorem{problem}[lemma]{Problem}
\DeclareMathOperator{\pic}{pic}
\DeclareMathOperator{\dep}{depth}
\DeclareMathOperator{\Spec}{Spec}
\DeclareMathOperator{\hei}{ht}
\DeclareMathOperator{\dw}{Div_{Weil}}
\DeclareMathOperator{\dc}{Div_{Cart}}
\DeclareMathOperator{\Pic}{Pic}
\DeclareMathOperator{\Clw}{Cl_{Weil}}
\DeclareMathOperator{\Clc}{Cl_{Cart}}
\DeclareMathOperator{\Cl}{Cl}
\title{On local divisor class groups of complete intersections}
\thanks{\textit{Mathematics subject classification.} primary: 14M10, 13C40; secondary: 13H10, 14C22, 13F05, 13C20}
\thanks{\textit{Key words. Complete intersection, Weil divisor, Cartier divisor, divisor class group, normal domain, parafactorial, Cohen--Macaulay ring, unique factorization domain}}
\author{Daniel Windisch}
\keywords{}
\thanks{During the preparation of this manuscript, the author was supported by the Austrian Science Fund (FWF), project P~5510-N26, which is part of the Special Research Program ``Quasi-Monte Carlo Methods: Theory and Applications''.}
\begin{document}

\maketitle

\begin{abstract}
Samuel conjectured in 1961 that a (Noetherian) local complete intersection ring that is a UFD in codimension at most three is itself a UFD. It is said that Grothendieck invented local cohomology to prove this fact. Following the philosophy that a UFD is nothing else than a Krull domain (that is, a normal domain, in the Noetherian case) with trivial divisor class group, we take a closer look at the Samuel--Grothendieck Theorem and prove the following generalization: Let $A$ be a local Cohen--Macaulay ring.
\begin{enumerate}
\item $A$ is a normal domain if and only if $A$ is a normal domain in codimension at most $1$.
\item Suppose that $A$ is a normal domain and a complete intersection. Then the divisor class group of $A$ is a subgroup of the projective limit of the divisor class groups of the localizations $A_p$, where $p$ runs through all prime ideals of height at most $3$ in $A$.
\end{enumerate}
We use this fact to describe for an integral Noetherian locally complete intersection scheme $X$ the gap between the groups of Weil and Cartier divisors, generalizing in this case the classical result that these two concepts coincide if $X$ is locally a UFD.
\end{abstract}

\section{Introduction}

It is a famous theorem by Auslander and Buchsbaum~\cite{regular-UFD} based on work by Nagata~\cite{Nagata} that every regular local ring is a unique factorization domain (UFD). This is probably the most prominent instance of a result linking a geometric property with a statement about factorization in local rings.

Lipman~\cite{Lipman} pointed out that, at least for projective normal closed subvarieties $V$ of a projective space (over any field), there is even a connection between the global geometric behaviour and the unique factorization property of local rings. In this setting, the local ring at the vertex of the projective cone over $V$ is a UFD if and only if every irreducible subvariety of  codimension one is cut out (scheme-theoretically) by a hypersurface of the ambient space.

However, the connection seems to lie deeper: The theory of divisors plays a crucial role in both, algebraic geometry and the combinatorial theory of non-unique factorizations in integral domains. Indeed, factorizations of non-units as products of irreducible elements in a normal Noetherian domain $A$ (or, more generally, in a Krull domain) only depend on the divisor class group of $A$ and the distribution of prime divisors over the classes, see~\cite[Chapters 3, 4 and 7]{GHK}. 

In view of this fact, the study of divisor class groups became prominent in geometric contexts, also apart from algebraic geometry: Kainrath~\cite{Kainrath} studied the distribution of prime divisor for a wide range of integral $k$-algebras. Remaining cases were settled by Fadinger-Held and the present author for affine toric varieties and their non-Noetherian analogues~\cite{KrullPrimeDiv, affine}, where they also give explicit descriptions of divisor class groups of one-dimensional toric varieties.

Combinatorial and arithmetic applications of these and similar results can be found in a growing number of papers. A celebrated work in this context is Kainrath's Theorem~\cite{KainrathFact} for Krull monoids with infinite divisor class group and prime divisors in all classes. We also want to mention the arithmetic investigations of~\cite[Section 5]{weaklyKrull2} which make direct use of the results mentioned in the preceding paragraph.

Not every Noetherian local ring that is a complete intersection is necessarily regular. In view of the above remarks, it is therefore reasonable to ask which local complete intersection rings are UFDs or, more generally, what can be said about their divisor class groups. Partial answers to this are due to Andreotti--Salmon~\cite{Andreotti-Salmon}, Lefschetz~\cite{Lefschetz} and Severi~\cite{Severi}. 
These results have eventually led to the following theorem -- conjectured by Samuel~\cite[p. 172]{Samuel} and proven by Grothendieck~\cite[p. 132]{SGA2} --, where we say that a property is satisfied by a ring $A$ \textit{in codimension} $n$ if all localizations of $A$ at prime ideals of height $n$ have the property.

\begin{theorem*}[Samuel--Grothendieck]
Let $A$ be a local complete intersection ring such that $A$ is a UFD in codimension at most $3$. Then $A$ is a UFD.
\end{theorem*}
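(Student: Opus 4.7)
The plan is to combine an induction on dimension with Grothendieck's parafactoriality theorem for local complete intersections, whose own proof rests on local cohomology. I would first verify that $A$ is a normal domain: being a complete intersection, $A$ is Cohen--Macaulay and so satisfies Serre's condition $S_2$, while the UFD hypothesis in codimension $\leq 1$ gives $R_1$. Serre's criterion then yields normality, and being local, reduced, and having a unique minimal prime, $A$ is a normal domain.

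Next, I would induct on $d = \dim A$. If $d \leq 3$ the claim is immediate since $A$ itself is a localization at a prime of height $\leq 3$. For $d \geq 4$, every non-maximal localization $A_p$ is again a local complete intersection of strictly smaller dimension, and its height $\leq 3$ localizations are localizations of $A$ at primes of height $\leq 3$, hence UFDs by hypothesis; the induction hypothesis then gives that each such $A_p$ is a UFD. Thus $A$ is locally factorial on the punctured spectrum $U = \Spec(A) \setminus \{\mathfrak{m}\}$, and $\Cl(A) \cong \Pic(U)$: Weil divisors on $A$ restrict to Weil divisors on $U$ (since the complement has codimension $\geq 2$), and on $U$ Weil divisors coincide with Cartier divisors by local factoriality. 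Since a normal local domain is a UFD precisely when its divisor class group vanishes, the goal reduces to showing $\Pic(U) = 0$, i.e., that $A$ is parafactorial.

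This last step is the heart of the proof and the main obstacle. It invokes Grothendieck's theorem (SGA~2, Exp.~XI) asserting that a local complete intersection of depth at least $4$ is parafactorial. Writing $A \cong R/(f_1, \dots, f_r)$ as a quotient of a regular local ring $R$, one transfers the (known) parafactoriality of $R$ to $A$ along an effective Lefschetz condition on the formal completion of $\Spec R$ along $V(f_1, \dots, f_r)$; the verification of that condition rests on vanishing of local cohomology groups $H^i_\mathfrak{m}$ in low degrees, made accessible by the Koszul resolution of the regular sequence $f_1, \dots, f_r$. Since $A$ is Cohen--Macaulay with $\dim A = d \geq 4$, depth equals dimension so the hypothesis is met, whence $\Cl(A) \cong \Pic(U) = 0$ and $A$ is a UFD.
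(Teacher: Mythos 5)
Your proof is correct and follows essentially the same route as the paper: normality from Serre's criterion together with the local hypothesis in codimension $\leq 1$, then an induction over the punctured spectrum that reduces everything to Grothendieck's parafactoriality theorem for complete intersections of dimension at least $4$ (the paper's Fact~\ref{remark:parafactorial}). The only difference is one of packaging: the paper factors the second half through the more general Theorem~\ref{theorem:classgroup}, embedding $\Cl(A)$ into $\varprojlim_{\hei(p)\leq 3}\Cl(A_p)$, whereas you argue directly that the punctured spectrum $U$ is locally factorial and identify $\Cl(A)$ with $\Pic(U)$; the underlying induction and the appeal to parafactoriality are the same.
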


In a 1968 survey on unique factorizations~\cite{SamuelSurvey}, Samuel notes that Grothendieck uses the most ``powerful methods of his theory of schemes'' and that no purely ring-theoretic proof was known at the time. It was only in 1994 that F.W. Call~\cite{Call} was able to bypass the scheme-theoretic machinery and prove Samuel's conjecture by purely algebraic means.

A key ingredient in both, Grothendieck's and Call's proof, is the following notion: A Noetherian local ring $(A,m)$ is called \textit{parafactorial} if $\dep(A) \geq 2$ and $\pic(\Spec(A) \setminus \{m\}) = 0$, where $\pic$ denotes the Picard group as defined in the appendix of Call's paper~\cite{Call}. This group is a common generalization of several important notions of class group, for instance:

\begin{enumerate}
\item $\pic(\Spec(A))$ is isomorphic to the usual Picard group of $A$.
\item If $A$ is in addition a normal domain and $G$ is the set of all height-one prime ideals of $A$ then $\pic(G)$ is isomorphic to the divisor class group of $A$.
\end{enumerate}

A central result in the works of Grothendieck~\cite[Théorème XI.3.13(ii)]{SGA2} and Call~\cite[Theorem 5]{Call} is the following fact which plays a critical role also in the present paper. It is said that it was in order to prove this that Grothendieck invented local cohomology.

\begin{fact}\label{remark:parafactorial}
A complete intersection of dimension at least $4$ is parafactorial.
\end{fact}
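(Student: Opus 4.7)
The depth condition is immediate: a complete intersection is Cohen--Macaulay, so $\dep(A) = \dim(A) \geq 4 \geq 2$. The real content is the vanishing of $\pic(U_A)$, where I abbreviate $U_A := \Spec(A) \setminus \{m\}$.

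The plan is to present $A$, possibly after passing to its completion, as $R/(f_1, \ldots, f_r)$ where $R$ is a regular local ring and $(f_1, \ldots, f_r)$ is a regular sequence in the maximal ideal of $R$, and then to induct on the embedding codimension $r$. The base case $r = 0$ is the Auslander--Buchsbaum theorem: the regular local ring $R$ is a UFD, so its divisor class group vanishes, and since $\dep(R) = \dim(R) \geq 4 \geq 2$ one may identify $\pic(U_R)$ with the divisor class group via item (2) of the bulleted list preceding the statement, giving $\pic(U_R) = 0$.

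For the inductive step, put $B := R/(f_1, \ldots, f_{r-1})$, so that $A = B/(f_r)$ and $B$ is again a complete intersection, of dimension $\dim(A) + 1 \geq 5$; by the inductive hypothesis $\pic(U_B) = 0$. The crux is then to show that the natural restriction map $\pic(U_B) \to \pic(U_A)$ induced by the closed immersion defined by $f_r$ is surjective: granting this, $\pic(U_A) = 0$ follows at once.

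Establishing this surjectivity is the heart of the proof and the reason local cohomology enters. The strategy is to describe a line bundle on $U_A$ via formal deformation data along the hypersurface $V(f_r) \subset \Spec(B)$, and to control the obstructions both to lifting such data and to algebraizing it into an actual line bundle on $U_B$ by the local cohomology groups $H^i_m(B)$ for small $i$. Since $B$ is Cohen--Macaulay of depth at least $5$, Grothendieck's theorem on local cohomology of Cohen--Macaulay rings gives $H^i_m(B) = 0$ for $i < 5$, which annihilates the relevant obstructions and yields the required surjectivity. This local-cohomological step is the main obstacle of the argument and is precisely the one for which Grothendieck developed the theory in the first place -- and which Call later replaced by purely algebraic means.
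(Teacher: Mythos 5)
First, a point of order: the paper does not prove this statement at all --- it is quoted as a known result, with references to Grothendieck (SGA~2, Théorème XI.3.13(ii)) and Call (Theorem~5) --- so there is no in-paper argument to measure yours against. Judged on its own terms, your proposal is an accurate roadmap of Grothendieck's original proof: reduction to $\widehat{A}=R/(f_1,\dots,f_r)$ with $R$ regular and $(f_i)$ a regular sequence, induction on $r$ with Auslander--Buchsbaum as the base case, and a Lefschetz-type surjectivity $\pic(U_B)\to\pic(U_A)$ in the inductive step (writing $U_A=\Spec(A)\setminus\{m\}$). But it is not a proof, because the one step you yourself call ``the heart'' is asserted rather than established. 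Proving that surjectivity is essentially the entire content of Exposés X--XI of SGA~2: one must (i) compare $\pic(U_B)$ with the Picard group of the formal completion of $U_B$ along $V(f_r)\cap U_B$ (the conditions Lef and Leff, whose verification rests on the finiteness and comparison theorems of local cohomology), and (ii) compare the latter with $\varprojlim_n\pic$ of the infinitesimal neighbourhoods, where the obstruction and ambiguity groups are $H^2(U_A,\mathcal{O}_{U_A})\cong H^3_m(A)$ and $H^1(U_A,\mathcal{O}_{U_A})\cong H^2_m(A)$. Note that these are local cohomology groups of $A$, not of $B$ as you state, and their vanishing (depth of $A$ at least $4$) is precisely where the hypothesis $\dim(A)\geq 4$ enters; your ``$H^i_m(B)=0$ for $i<5$'' is consistent with this via the long exact sequence for $0\to B\xrightarrow{f_r}B\to A\to 0$, but as written it misplaces the decisive vanishing. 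None of steps (i)--(ii) is carried out, so what you have is a description of where a proof lives, not a proof.

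Two smaller gaps. The reduction to the completion needs the nontrivial fact that $A$ is parafactorial if $\widehat{A}$ is (faithfully flat descent for $\pic$ of the punctured spectrum). And in the base case, item~(2) of the list preceding the statement identifies $\pic(\Spec^1(R))$ with $\Cl(R)$, not $\pic(U_R)$ with $\Cl(R)$; to get $\pic(U_R)=0$ from $\Cl(R)=0$ you still need injectivity of the canonical map $\pic(U_R)\to\pic(\Spec^1(R))$, e.g.\ by extending an invertible sheaf on $U_R$ to a reflexive rank-one module over the UFD $R$ and restricting back (using $\dep(R)\geq 2$).
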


Our basic goal is to suggest a new perspective on the Samuel--Grothendieck Theorem. Since this result deals with Noetherian rings, we will restrict ourselves to this case. Hence, in what follows, 
\[
A \text{ will always denote a Noetherian commutative ring,} 
\]
possibly with some additional properties like being local, complete intersection, or Cohen--Macaulay, respectively. We recall from Bourbaki~\cite{Bourbaki} that, in the Noetherian case, a \textit{unique factorization domain} (UFD) can be defined as a normal domain with trivial divisor class group. It can be seen with relative ease that this is indeed equivalent to the property of a UFD of (essentially) unique factorizations into prime elements.

Following this philosophy, we are then able to view the Samuel--Grothendieck Theorem as the union of two independent parts corresponding to the two independent properties of a UFD above. The first part, which is represented by Corollary~\ref{theorem:Krull}, says that a local Cohen--Macaulay ring (so, in particular, a local complete intersection) is a normal domain if and only if it is normal in codimension at most $1$. The second part (Theorem~\ref{theorem:classgroup}) shows that the divisor class group of a normal local complete intersection domain can be described using information in codimension at most $3$.
Combining these two observations, we are able to reobtain the Samuel--Grothendieck Theorem as a corollary. 

In a final section, we investigate, for an integral Noetherian normal scheme $X$, the loss of information when passing from Weil to Cartier divisors. It is a classical theorem that these two groups of divisors (and their respective class groups) are isomorphic when $X$ is locally factorial, that is, all its local rings are UFDs~\cite[Ch. II, Prop. 6.11]{Hartshorne}. We show that the group of Weil divisors modulo the group of Cartier divisors of $X$ is a subgroup of the inverse limit of the divisor class groups of the local rings of $X$. Using Theorem~\ref{theorem:classgroup} in the case where $X$ is locally a complete intersection, we can restrict this limit to local rings of Krull dimension at most $3$.

\section{Noetherian local rings that are normal domains}

In this section, we give an easy characterization of Noetherian local rings that are normal domains by normality in codimension $\leq 1$. 

\begin{remark}\label{remark:domain} \cite[Exercise 9.11]{Matsumura}
Let $A$ be a Noetherian ring and $p_1,\ldots,p_r$ the minimal prime ideals of $A$. If the localization $A_{p}$ is an integral domain for each prime ideal $p$ of $A$ then $A = A/p_1 \times \ldots \times A/p_r$. In particular, if $A$ is in addition local then $A$ is itself an integral domain.
\end{remark}

Replacing ``normal domain'' by ``UFD'', the following is analogous to a result by Grothendieck~\cite[XI Corollaire 3.10]{SGA2} that can also be found in the paper of Call~\cite[Proposition 2]{Call}.

\begin{proposition}\label{proposition:Krull}
A Noetherian local ring $(A,m)$ of dimension at least $2$ is a normal domain if and only if $\dep(A) \geq 2$ and $A_p$ is a normal domain for all prime ideals $p \neq m$ of $A$.
\end{proposition}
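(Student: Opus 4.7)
The plan is to reduce everything to Serre's criterion for normality: a Noetherian ring is a finite product of normal domains if and only if it satisfies $(R_1)$ and $(S_2)$. The ``only if'' direction of the proposition is then immediate, since normality passes to localizations and since $(S_2)$ applied at the maximal ideal, together with $\dim A \geq 2$, forces $\dep(A) \geq \min(\hei(m),2) = 2$.

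For the converse, I would verify $(R_1)$ and $(S_2)$ directly from the hypotheses. Because $\dim A \geq 2$, every prime $p$ with $\hei(p) \leq 1$ is strictly below $m$, so by assumption $A_p$ is a normal domain of Krull dimension at most one, i.e.\ a field or a discrete valuation ring, and in particular regular; this is $(R_1)$. For $(S_2)$, the inequality $\dep(A_p) \geq \min(\hei(p),2)$ has to be checked in two regimes. At a prime $p \neq m$ the hypothesis makes $A_p$ a normal domain, and the elementary fact that Noetherian normal rings are $(S_2)$ provides what is needed. At $p = m$ the assumption $\dep(A) \geq 2$ together with $\min(\hei(m),2) \leq 2$ closes the case.

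Serre's criterion then yields a decomposition $A \cong A/p_1 \times \cdots \times A/p_r$, where $p_1,\ldots,p_r$ are the minimal primes of $A$ and each factor is a normal domain. Since $A$ is local it is indecomposable as a direct product — this is essentially the content of Remark~\ref{remark:domain} — so $r=1$ and $A$ is itself a normal domain.

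I do not foresee a real obstacle, only some careful bookkeeping. The point to watch is that the hypothesis ``$A_p$ is a normal domain for $p \neq m$'' is used twice: once to secure regularity in codimension at most one, where $\dim A \geq 2$ is exactly what guarantees that the height $\leq 1$ primes are distinct from $m$, and once to propagate $(S_2)$ to all non-maximal primes. The depth hypothesis at $m$ is then the missing ingredient that is not implied by information away from $m$, explaining why it must be imposed separately.
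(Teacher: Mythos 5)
Your proposal is correct and follows essentially the same route as the paper: verify Serre's criterion $(R_1)$ and $(S_2)$ using the hypotheses at $p \neq m$ and the depth assumption at $m$, then use the locality of $A$ (the content of Remark~\ref{remark:domain}) to pass from ``finite product of normal domains'' to ``normal domain''. You merely spell out the $(R_1)$/$(S_2)$ bookkeeping in more detail than the paper does, which is harmless.
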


\begin{proof}
First, let $A$ be a normal domain. It is clear that every localization of a normal domain is again normal. Moreover, $\dep(A) \geq 2$ by $\dim(A) \geq 2$ and Serre's criterion for normality, see for instance the textbook by Matsumura~\cite[Theorem 23.8]{Matsumura}.

For the converse, suppose that $\dep(A) \geq 2$ and $A_p$ is a normal domain for all $p \neq m$. Since $A_p$ is a normal domain for all $p \neq m$, Serre's criterion for normality is satisfied in these cases. The assumptions $\dim(A) \geq 2$ and $\dep(A) \geq 2$ cover the case $p = m$ and whence $A$ is normal. It follows by Remark~\ref{remark:domain} that $A$ is a domain and, therefore, $A$ is a normal domain.
\end{proof}

The following has already been noted by Geroldinger, Kainrath and Reinhart~\cite[Example 5.7.1]{GKR} in the case of integral domains. Moreover, it can also be easily deduced from Serre's criterion for normality directly. We prove it for Noetherian local rings in the spirit of Grothendieck~\cite[Corollaire XI 3.14]{SGA2} and Call~\cite[Theorem 7]{Call} using our Proposition~\ref{proposition:Krull}. 

\begin{corollary}\label{theorem:Krull}
Let $A$ be a local Cohen--Macaulay ring. $A$ is a normal domain if and only if $A_p$ is a normal domain for all prime ideals $p$ of $A$ of height at most $1$.
\end{corollary}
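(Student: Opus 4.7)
The forward direction is immediate since localizations of a normal domain are again normal domains. The interesting direction is the converse, for which the plan is to reduce to Proposition~\ref{proposition:Krull} by induction on $n = \dim A$.

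For the base case $n \leq 1$, the maximal ideal $m$ itself has height at most $1$, so the hypothesis applied to $p = m$ says that $A = A_m$ is a normal domain, and there is nothing to prove. For the inductive step, assume $n \geq 2$ and that the statement is true for all local Cohen--Macaulay rings of strictly smaller dimension. I would like to apply Proposition~\ref{proposition:Krull} to $A$. Its first hypothesis, $\dep(A) \geq 2$, is free: $A$ is Cohen--Macaulay and $\dim A \geq 2$ forces $\dep A = \dim A \geq 2$. The remaining task is to verify that $A_p$ is a normal domain for every prime ideal $p \neq m$.

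Fix such a $p$. Then $A_p$ is a local Cohen--Macaulay ring (localizations of Cohen--Macaulay rings are Cohen--Macaulay) of dimension $\hei(p) < n$. The prime ideals of $A_p$ of height at most $1$ are in bijection with the primes $q \subseteq p$ of $A$ of height at most $1$, and under this correspondence the localizations coincide: $(A_p)_{qA_p} = A_q$. Each such $A_q$ is a normal domain by the standing hypothesis on $A$. Hence $A_p$ satisfies the assumptions of the corollary in strictly smaller dimension, so by induction $A_p$ is a normal domain. Having checked both hypotheses, Proposition~\ref{proposition:Krull} yields that $A$ is itself a normal domain.

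I do not expect a genuine obstacle: the induction is clean once one observes that the Cohen--Macaulay property and the ``normal in codimension $\leq 1$'' property both localize, and that Proposition~\ref{proposition:Krull} supplies exactly the step from dimension $<n$ to dimension $n$ (handling both the ``$A$ is a domain'' and the ``$A$ is normal'' parts at the top prime $m$ in one go). The only point where care is needed is the base case $n \leq 1$, where Proposition~\ref{proposition:Krull} does not apply but where the hypothesis already gives the conclusion directly because $\hei(m) \leq 1$.
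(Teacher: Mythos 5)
Your proof is correct and follows essentially the same route as the paper: the paper runs the induction on $\hei(p)$ for primes of the fixed ring $A$ (concluding at $p=m$), whereas you induct on $\dim A$ over the class of local Cohen--Macaulay rings satisfying the hypothesis, but the substance is identical --- the Cohen--Macaulay property supplies $\dep = \dim \geq 2$, the inductive hypothesis supplies normality at the smaller primes, and Proposition~\ref{proposition:Krull} closes the step. No gaps.
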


\begin{proof}
We show that $A_p$ is a normal domain by induction on $\hei(p)$, $p \in \Spec(A)$. This suffices because $A$ is local. If $\hei(p) \in \{0,1\}$, this is true by hypothesis. Suppose that $\hei(p) \geq 2$ and that $A_q$ is a normal domain for all prime ideals $q$ of $A$ that are properly contained in $p$. As a localization of a Cohen--Macaulay ring, $A_p$ is itself Cohen--Macaulay. So $\dep(A_p) = \dim(A_p) \geq 2$. Hence, $A_p$ is a normal domain by the induction hypothesis and Proposition~\ref{proposition:Krull}.
\end{proof}

\section{The divisor class group of complete intersection normal domains}\label{section:classgroup}

For a non-negative integer $i$, denote by $\Spec^i(A)$ the set of prime ideals of height $i$ of the ring~$A$. A basic property of a normal domain $A$ is $A = \bigcap_{p \in \Spec^1(A)} A_p$. A \textit{subintersection} of $A$ is a ring of the form $A' = \bigcap_{p \in \mathcal{P}} A_p$, where $\mathcal{P} \subseteq \Spec^1(A)$. Note that every localization of a normal domain is a subintersection. 

By $\Cl(A)$ we denote the divisor class group of $A$. For details on divisorial ideals and divisor class groups, we refer to Bourbaki's Commutative Algebra~\cite{Bourbaki}. The following is a theorem by Nagata. 

\begin{fact}\label{remark:Nagata} (see \cite[Theorem 7.1]{Fossum})
Let $A$ be a normal domain and $A'$ be a subintersection of $A$. Then the map
\begin{align*}
\Cl(A) &\to \Cl(A')\\
[I] &\mapsto [IA']
\end{align*}
is a surjective homomorphism of divisor class groups.
\end{fact}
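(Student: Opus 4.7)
The plan is to use the Krull domain description of $\Cl(A)$ to reduce the statement to a computation on free abelian groups of Weil divisors. Since $A$ is a Noetherian normal domain, it is a Krull domain, so $\Cl(A)$ is canonically identified with the quotient of the free abelian group $\dw(A)$ on $\Spec^1(A)$ by the subgroup of principal divisors $\mathrm{div}_A(f) = \sum_{p \in \Spec^1(A)} v_p(f)\,[p]$, where $f$ ranges over the units of the field of fractions $K$ of $A$ and $v_p$ denotes the normalized discrete valuation of $A_p$.

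The first step is to establish that $A'$ is itself a Krull domain whose height-one primes are precisely the ideals $pA'$ with $p \in \mathcal{P}$, and that in addition $A'_{pA'} = A_p$ so that the essential valuation of $A'$ at $pA'$ coincides with $v_p$. This is a standard consequence of the stability of Krull domains under subintersections together with the finite-character property inherited from $A$, but it is the genuinely technical part of the argument.

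Once this identification is in place, I would define the obvious homomorphism $\varphi \colon \dw(A) \to \dw(A')$ sending a generator $[p]$ to $[pA']$ if $p \in \mathcal{P}$ and to $0$ otherwise. This $\varphi$ is surjective since every generator of $\dw(A')$ is hit. To check that $\varphi$ descends to the class groups, compute for $f \in K^\times$
$$ \varphi(\mathrm{div}_A(f)) \;=\; \sum_{p \in \mathcal{P}} v_p(f)\,[pA'] \;=\; \mathrm{div}_{A'}(f), $$
which holds because the set $\{v_p : p \in \mathcal{P}\}$ constitutes exactly the essential valuations of $A'$. Hence $\varphi$ carries principal divisors to principal divisors and induces a surjective group homomorphism $\overline{\varphi} \colon \Cl(A) \to \Cl(A')$.

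Finally, I would identify $\overline{\varphi}$ with the map $[I] \mapsto [IA']$. A divisorial ideal $I$ of $A$ has Weil divisor $\sum_{p \in \Spec^1(A)} v_p(I)\,[p]$, and applying $\varphi$ yields $\sum_{p \in \mathcal{P}} v_p(I)\,[pA']$; this is exactly the Weil divisor of the divisorial closure of $IA'$, because $v_p(IA') = v_p(I)$ for every $p \in \mathcal{P}$ while the height-one primes of $A$ outside $\mathcal{P}$ contribute nothing to $\dw(A')$. The main obstacle, as flagged above, is the first step: correctly identifying $\Spec^1(A')$ and verifying the equality $A'_{pA'} = A_p$; the rest is essentially bookkeeping.
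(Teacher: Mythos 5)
The paper offers no proof of this statement: it is quoted as Nagata's theorem with a pointer to Fossum, so there is nothing internal to compare against. Your outline reconstructs the standard argument from that source (equivalently, Bourbaki, Ch.~VII) and it is sound in structure: reduce to Weil divisors of the Krull domains $A$ and $A'$, check that principal divisors go to principal divisors, and match the induced map with $[I]\mapsto[IA']$. Two remarks. First, the step you flag as the real content is indeed where everything happens, and it decomposes cleanly: (i) the family $\{A_p\}_{p\in\mathcal{P}}$ still has finite character, so $A'=\bigcap_{p\in\mathcal{P}}A_p$ is Krull with essential valuations \emph{among} the $v_p$; (ii) for each $p\in\mathcal{P}$ the sandwich $A\subseteq A'\subseteq A_p$ gives $A'_{A\setminus p}=A_p$, so every $v_p$ with $p\in\mathcal{P}$ really is essential for $A'$, centered at the height-one prime $q_p=pA_p\cap A'$ with $A'_{q_p}=A_p$. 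Including that two-line sandwich argument would make the proof self-contained. Second, a small imprecision: the height-one primes of $A'$ are the contractions $q_p=pA_p\cap A'$, not the extensions $pA'$, which need not be prime; since $q_p$ is the divisorial closure of $pA'$, your formulas are correct once all ideals of $A'$ are read as their divisorial closures, but as written the identification of generators of $\dw(A')$ with ``$[pA']$'' glosses over this. Neither point is a gap in the sense of a failing step; the proposal is essentially the proof the cited reference gives.
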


\begin{remark}\label{remark:limit}
If $A$ is a normal domain and $p \subseteq q$ are prime ideals of $A$ then we denote the map from Fact~\ref{remark:Nagata} by $\phi_{p,q}: \Cl(A_q) \to \Cl(A_p)$. Viewing the projective limit
\[ \varprojlim_{\substack{ {p \in \Spec^i(A)} \\ {i \leq 3} }} \Cl(A_p) \]
with respect to the projective system given by the $\phi_{p,q}$, where $p\subseteq q$ are of height at most $3$, as a subgroup of the product of the $\Cl(A_p)$, we are given the map
\[
\phi: \Cl(A) \to \varprojlim_{\substack{ {p \in \Spec^i(A)} \\ {i \leq 3} }} \Cl(A_p)
\]
that is defined component-wise by $\phi_{p,m}$, in case $(A,m)$ is local. 
\end{remark}

As a main result, we show in our situation of interest that the map $\phi$ of Remark~\ref{remark:limit} is injective and that $\Cl(A)$ can therefore be seen as a subgroup of $\varprojlim_{\hei(p) \leq 3} \Cl(A_p)$. 
In order to apply Fact~\ref{remark:parafactorial}, we first need a lemma. For the notions of $G$-invertibility and $\pic(G)$ used in the proof, see the appendix of Call's paper~\cite{Call}.

\begin{lemma}\label{lemma:parafactorial}
Let $(A,m)$ be a Noetherian local normal domain of dimension at least $2$ and suppose that $A$ is parafactorial. Let $I$ be an ideal of $A$.

If $I_p$ is a principal ideal of $A_p$ for all prime ideals $p \neq m$ of $A$ then $I$ is a principal ideal of $A$. 
\end{lemma}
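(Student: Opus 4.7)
The plan is to use parafactoriality on the punctured spectrum $G = \Spec(A) \setminus \{m\}$ to trivialize the invertible sheaf associated to $I$ there, and then to lift the resulting local generator to all of $A$ by means of the depth hypothesis together with the Krull intersection formula for normal domains. I will proceed in three steps: trivialize $\widetilde I$ on $G$ to produce a candidate generator $g$; check $g \in A$ and $I \subseteq gA$; and finally show $gA \subseteq I$.

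For every $p \in G$ the stalk $I_p$ is a nonzero principal ideal of the domain $A_p$, and therefore free of rank one as an $A_p$-module; so $\widetilde I|_G$ is a line bundle on $G$. Parafactoriality gives $\pic(G) = 0$, yielding an isomorphism $\mathcal{O}_G \xrightarrow{\sim} \widetilde I|_G$ whose image of the section $1$ is an element $g$ in the fraction field $K$ of $A$ satisfying $I_p = g A_p$ for every $p \in G$. Since $\dim(A) \geq 2$, every height-one prime of $A$ lies in $G$, so $g \in gA_p = I_p \subseteq A_p$ at each such $p$; the Krull intersection $A = \bigcap_{p \in \Spec^1(A)} A_p$, valid in any normal Noetherian domain, then forces $g \in A$. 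Running the same argument on $x/g$ for arbitrary $x \in I$ shows $I \subseteq gA$.

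The hard part will be the reverse inclusion $gA \subseteq I$, that is, showing $g \in I$. The plan is to apply the local cohomology long exact sequence at $m$ to the short exact sequence $0 \to I \to gA \to gA/I \to 0$. The quotient $gA/I$ is $m$-torsion because $I_p = gA_p$ for every $p \in G$, while $gA \cong A$ together with $\dep(A) \geq 2$ gives $H^0_m(gA) = H^1_m(gA) = 0$, so the sequence identifies $gA/I$ with a submodule of $H^1_m(I)$. The lemma thereby reduces to the vanishing $H^1_m(I) = 0$, equivalently $\dep_m(I) \geq 2$. This final vanishing is the crux, and one should obtain it by combining the local-principality of $I$ on $G$ (which makes $I$ reflexive, hence $S_2$, away from $m$) with the depth hypothesis at $m$ supplied by parafactoriality; alternatively, once $\dep_m(I) \geq 2$ is in hand, one can avoid local cohomology altogether and finish via the Krull intersection $I = \bigcap_{p \in \Spec^1(A)} I_p = \bigcap_{p \in \Spec^1(A)} g A_p = gA$.
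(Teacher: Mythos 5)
Your first two steps are a sound, explicit unwinding of the paper's argument: the paper stays entirely inside Call's formalism, sending the class $[I]_G=0$ along the canonical map $\pic(G)\to\pic(\Spec^1(A))=\Cl(A)$, whereas you trivialize the line bundle on the punctured spectrum by hand, extract a generator $g$, and use $A=\bigcap_{\hei(p)=1}A_p$ to get $g\in A$ and $I\subseteq gA$. Up to that point the two proofs are the same idea in different clothing.

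The gap is in your third step, and it cannot be filled for the statement as literally posed: the vanishing $H^1_m(I)=0$ is false for a general ideal that is locally principal off $m$, and your parenthetical claim that local principality on $G$ ``makes $I$ reflexive'' is wrong. Take $A=k[[x,y]]$ and $I=m=(x,y)$: then $A$ is parafactorial, $I_p=A_p$ is principal for every $p\neq m$, but $I$ is not principal, is not reflexive (here $(A:_K m)=A$, so $I^{**}=A$), and the sequence $0\to m\to A\to k\to 0$ together with $\dep(A)=2$ gives $H^1_m(I)\cong k\neq 0$; in your notation $g$ is a unit and $gA=A\supsetneq I$. So your reduction of the reverse inclusion to $\dep_m(I)\geq 2$ is correct, but the target is unattainable in general. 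The honest conclusion is that the lemma needs the additional hypothesis that $I$ is divisorial (reflexive) --- which is exactly how it is applied in Theorem~\ref{theorem:classgroup}, and which the paper's own final step (``$[I]_{\Spec^1(A)}=0$ \dots{} means that $I$ is principal'') also uses tacitly, since a trivial class in $\Cl(A)$ only forces the divisorial closure of $I$ to be principal. Once $I$ is assumed divisorial, your ``alternative'' finish is the right one and needs no local cohomology at all: $I=\bigcap_{\hei(p)=1}I_p=\bigcap_{\hei(p)=1}gA_p=gA$.
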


\begin{proof}
Set $G = \Spec(A) \setminus \{m\}$. As $A$ is Noetherian, $I$ is finitely generated. The assumption on the $I_p$ therefore implies that $I$ is $G$-invertible. So, it makes sense to consider $[I]_G$ which is the zero class because $\pic(G) = 0$ by definition of the term parafactorial. Applying the canonical map $\pic(G) \to \pic(\Spec^1(A))$ gives $[I]_{\Spec^1(A)} = 0$. Since $\pic(\Spec^1(A)) = \Cl(A)$, this means that $I$ is principal.
\end{proof}

\begin{theorem}\label{theorem:classgroup}
Let $A$ be a normal local domain that is a complete intersection.
 
Then the divisor class group of $A$ is a subgroup of \[\varprojlim_{\hei(p) \leq 3} \Cl(A_p), \] the projective limit of the divisor class groups in codimension at most $3$ with respect to the projective system of Remark~\ref{remark:limit}.
\end{theorem}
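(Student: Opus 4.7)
My plan is to prove that the map $\phi: \Cl(A) \to \varprojlim_{\hei(p) \leq 3} \Cl(A_p)$ of Remark~\ref{remark:limit} is injective, by induction on $n = \dim(A)$. The base case $n \leq 3$ is immediate: the maximal ideal $m$ satisfies $\hei(m) \leq 3$, so $m$ occurs in the indexing set of the projective limit, and the $m$-component of $\phi$ is the identity map $\Cl(A) \to \Cl(A_m) = \Cl(A)$, which forces $\ker(\phi) = 0$.

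For the inductive step $n \geq 4$, I would take an element $[I] \in \ker(\phi)$, choose $I$ to be a divisorial integral ideal of $A$ representing this class, and aim to show that $I$ is principal. The crucial input is that $A$, being a complete intersection of dimension at least $4$, is parafactorial by Fact~\ref{remark:parafactorial}. By Lemma~\ref{lemma:parafactorial} it therefore suffices to show that $IA_p$ is a principal ideal of $A_p$ for every prime $p \neq m$. Fix such a $p$; since $p \neq m$, we have $\dim(A_p) = \hei(p) < n$, and $A_p$ remains a normal local complete intersection domain (each of these properties being preserved by localization). By the induction hypothesis, the analogous map
\[
\phi^{(p)}: \Cl(A_p) \to \varprojlim_{\substack{q \in \Spec^j(A_p) \\ j \leq 3}} \Cl((A_p)_q)
\]
is injective. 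Any prime $q$ of $A_p$ with $\hei_{A_p}(q) \leq 3$ corresponds to a prime of $A$ contained in $p$ of the same height, so the $q$-component of $\phi^{(p)}([IA_p])$ is $[IA_q] = 0$ by the assumption $[I] \in \ker(\phi)$. Hence $[IA_p] = 0$ in $\Cl(A_p)$, and since $IA_p$ is divisorial and integral in $A_p$, this triviality of the class forces $IA_p$ to be a principal ideal, as required.

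The hard part is really just careful bookkeeping rather than any single deep step: one needs to check that divisoriality of $I$ descends to $IA_p$ (so that triviality in $\Cl(A_p)$ yields a literal principal ideal, and not merely a principal divisor class, which is what Lemma~\ref{lemma:parafactorial} consumes), that the height of a prime $q \subseteq p$ is the same in $A$ and in $A_p$, and that the transition maps $\phi_{q,p}$ of the projective system are compatible with localization so that the induction closes. Once these routine items are settled, the structural inputs---Fact~\ref{remark:parafactorial} supplying parafactoriality in dimension at least $4$ and Lemma~\ref{lemma:parafactorial} providing the descent from codimension-one-off-$m$ principality to global principality---carry the proof to its conclusion.
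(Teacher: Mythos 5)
Your proof is correct and rests on the same two key ingredients as the paper's (Fact~\ref{remark:parafactorial} supplying parafactoriality and Lemma~\ref{lemma:parafactorial} supplying the descent to principality); the only difference is organizational, in that the paper fixes $A$ and inducts on $\hei(p)$ to show $I_p$ is principal for every $p \neq m$, while you induct on $\dim(A)$ and invoke the injectivity statement itself for the smaller local rings $A_p$. Since both reduce to the identical application of the parafactoriality lemma at each stage, this is essentially the same argument.
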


\begin{proof}
If $\dim(A) \leq 3$, this is trivial. So suppose that $\dim(A) \geq 4$. We only need to show the following in order to prove that the map $\phi: \Cl(A) \to \varprojlim_{\hei(p) \leq 3} \Cl(A_p)$ of Remark~\ref{remark:limit} is injective:

\begin{itemize}
\item[(1)] Every divisorial ideal $I$ of $A$ with the property that $I_p$ is a principal ideal of $A_p$, for all prime ideals $p$ with $\hei(p) \leq 3$, is a principal ideal of $A$.
\end{itemize}

$A$ is parafactorial by Fact~\ref{remark:parafactorial}. Let $I$ be a divisorial ideal of $A$ and suppose that

\begin{itemize}
\item[(2)] \label{(2)} $I_p$ is a principal ideal of $A_p$ for all prime ideals $p$ of $A$ of height at most $3$.
\end{itemize}

In view of Lemma~\ref{lemma:parafactorial}, it suffices to show that $I_p$ is principal in case $\hei(p) \geq 4$, $p \neq m$. We do this by induction on $\hei(p)\geq 4$, $p \neq m$. As a localization of a complete intersection, $A_p$ is a complete intersection, see~\cite{complete-int-loc}. So $A_p$ is parafactorial by Fact~\ref{remark:parafactorial}. By the induction hypothesis together with~(2), we see that Lemma~\ref{lemma:parafactorial} applies to the ideal $I_p$ of $A_p$. Hence $I_p$ is principal.
\end{proof}

Corollary~\ref{theorem:Krull} and Theorem~\ref{theorem:classgroup} immediately imply the Samuel--Grothendieck Theorem.

\begin{corollary}
Let $A$ be a local complete intersection ring that is a UFD in codimension at most $3$. Then $A$ is a UFD.
\end{corollary}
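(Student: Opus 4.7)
The plan is to leverage the Bourbaki-style characterization of a Noetherian UFD, recalled in the introduction, namely that a UFD is precisely a normal domain whose divisor class group is trivial. Under this viewpoint the corollary splits into two independent verifications, one for each of the two defining properties, and each verification is set up to match one of the two principal results of the paper.

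First I would establish that $A$ is a normal domain. The hypothesis that $A$ is a UFD in codimension at most $3$ in particular means $A_p$ is a UFD for every prime $p$ of height at most $1$; since every UFD is a normal domain, $A_p$ is a normal domain in codimension at most $1$. Being a local complete intersection, $A$ is Cohen--Macaulay, so Corollary~\ref{theorem:Krull} applies and yields that $A$ is itself a normal domain.

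Next I would verify that $\Cl(A) = 0$. Having just shown $A$ is a normal local complete intersection domain, Theorem~\ref{theorem:classgroup} delivers an injective homomorphism
\[
\phi: \Cl(A) \hookrightarrow \varprojlim_{\hei(p) \leq 3} \Cl(A_p).
\]
The assumption that $A_p$ is a UFD for each such $p$ says exactly that $\Cl(A_p) = 0$ for every prime $p$ of height at most $3$, so the projective limit on the right is the zero group. Injectivity of $\phi$ then forces $\Cl(A) = 0$.

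Combining the two steps, $A$ is a normal domain with trivial divisor class group, hence a UFD by the Bourbaki characterization. There is no real obstacle at this stage: the deep content has been absorbed into Corollary~\ref{theorem:Krull} (covering the normality half of the Samuel--Grothendieck theorem) and Theorem~\ref{theorem:classgroup} (covering the class group half), and the role of this final argument is simply to observe that, once one adopts the viewpoint that a UFD is a normal domain with trivial divisor class group, the classical statement decouples cleanly into these two independent pieces.
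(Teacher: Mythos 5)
Your proposal is correct and follows exactly the route the paper intends: the paper states that Corollary~\ref{theorem:Krull} and Theorem~\ref{theorem:classgroup} immediately imply the result, and your two steps (normality in codimension $\leq 1$ plus Corollary~\ref{theorem:Krull}, then injectivity of $\phi$ into a trivial projective limit via Theorem~\ref{theorem:classgroup}) are precisely that argument, combined with the Bourbaki characterization of a Noetherian UFD. Nothing is missing.
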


In view of arithmetic applications of Theorem~\ref{theorem:classgroup} it is crucial to understand the distribution of prime divisors over the divisor classes. We therefore want to suggest the following open problem.

\begin{problem}
Describe for a local complete intersection ring $A$ the set of those classes in $\Cl(A) \subseteq \varprojlim_{\hei(p) \leq 3} \Cl(A_p)$ that contain height-one prime ideals of $A$. 
\end{problem}

\section{The gap between Weil and Cartier divisors}

Let $X$ be an integral Noetherian normal scheme. Then it makes sense to form both, the group $\dw(X)$ of Weil divisors and the group $\dc(X)$ of Cartier divisors on $X$. Moreover, the Cartier divisors can be identified with those Weil divisors that are locally principal~\cite[Ch. II, Rem. 6.11.2]{Hartshorne}. Under this identification, the principal divisors coincide and hence it induces an embedding of class groups $\Pic(X) = \Clc(X) \subseteq \Clw(X)$.

We want to note that, in his introductory textbook on algebraic geometry~\cite{Hartshorne}, Hartshorne introduces divisors only in the case when $X$ is also separated. However, in later works, he developed the theory more generally and managed to avoid this assumption, see for instance his 1994 paper about generalized divisors on Gorenstein schemes~\cite{Hartshorne94}.

If $Y$ is an integral Noetherian normal scheme that is in addition affine, $\dw(Y)$ is the usual group of divisors of the normal domain $\mathcal{O}(Y)$ of global sections on $Y$ and \[\Clw(Y) \cong \Cl(\mathcal{O}(Y)).\]
Using this, for every point $x \in X$, we have a surjective homomorphism from $\Clw(X)$ onto the divisor class group $\Cl(\mathcal{O}_{X,x})$ of the local ring at $x$ and these maps are compatible with those between these divisor class groups as described in Fact~\ref{remark:Nagata} and Remark~\ref{remark:limit}.
Now, the loss when passing from Weil to Cartier divisors can be described in the following way. This seems to be well-known, nevertheless, we spell out a short proof for sake of completeness.

\begin{proposition}\label{proposition:Weil/Cartier}
Let $X$ be an integral Noetherian normal scheme. Then the factor group $\dw(X)/ \dc(X)$ is a subgroup of
\[ \varprojlim_{x \in X} \Cl(\mathcal{O}_{X,x}), \]
the projective limit of the divisor class groups of the local rings of $X$.
\end{proposition}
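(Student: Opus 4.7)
The plan is to define a group homomorphism $\psi \colon \dw(X) \to \prod_{x \in X} \Cl(\mathcal{O}_{X,x})$ whose image is contained in the projective limit $\varprojlim_{x \in X} \Cl(\mathcal{O}_{X,x})$ and whose kernel is precisely $\dc(X)$; passing to the quotient will then give the desired embedding. For the $x$-th component $\psi_x$ I take the composition of the canonical quotient $\dw(X) \twoheadrightarrow \Clw(X)$ with the surjection $\Clw(X) \twoheadrightarrow \Cl(\mathcal{O}_{X,x})$ recorded in the paragraph preceding the proposition. Concretely, this sends a Weil divisor $D = \sum n_Z Z$ to the class of its restriction $\sum_{x \in Z} n_Z (Z \cap \Spec \mathcal{O}_{X,x})$ on $\Spec \mathcal{O}_{X,x}$.

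Checking that $\psi$ lands in the projective limit is a matter of unwinding definitions: for $y \in \overline{\{x\}}$, the Nagata restriction map $\phi_{x,y} \colon \Cl(\mathcal{O}_{X,y}) \to \Cl(\mathcal{O}_{X,x})$ of Remark~\ref{remark:limit} is induced by the further localization at the prime of $\mathcal{O}_{X,y}$ corresponding to $x$, and hence coincides with the direct restriction of divisors from $X$ to $\Spec \mathcal{O}_{X,x}$. Therefore $\phi_{x,y}(\psi_y(D)) = \psi_x(D)$ for every Weil divisor $D$, so the components of $\psi(D)$ form a compatible system.

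The heart of the matter is the identification $\ker \psi = \dc(X)$. Via the identification of Cartier divisors with locally principal Weil divisors recalled at the start of this section, a Cartier divisor is clearly principal in every stalk, giving the inclusion $\dc(X) \subseteq \ker \psi$. Conversely, if $\psi(D) = 0$, then for each $x \in X$ there is a rational function $f$ with $D = \mathrm{div}(f)$ as divisors on $\Spec \mathcal{O}_{X,x}$, and one must argue that this equality spreads to an open affine neighborhood $U$ of $x$. I expect this spreading step to be the only real obstacle: it will hinge on the fact that, by Noetherianity, only finitely many prime divisors in the supports of $D|_U$ and $\mathrm{div}(f)|_U$ meet any chosen affine open neighborhood of $x$, so after shrinking $U$ we can remove all those finitely many components that do not pass through $x$, at which point the two divisors agree on $U$ and $D$ is exhibited as locally principal at $x$.
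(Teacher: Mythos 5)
Your proof is correct and follows essentially the same route as the paper's: map $\dw(X)$ through $\Clw(X)$ into the projective limit via the stalkwise restriction maps and identify the kernel with the locally principal (i.e.\ Cartier) divisors. The only difference is that you spell out the spreading step showing that a divisor principal in the stalk $\mathcal{O}_{X,x}$ is principal on an open neighborhood of $x$ --- a detail the paper absorbs into the cited identification of Cartier divisors with locally principal Weil divisors --- and your finiteness-of-components argument for that step is sound.
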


\begin{proof}
The maps $\Clw(X) \to \Cl(\mathcal{O}_{X,x})$ induce a map \[\Clw(X) \to \varprojlim_{x \in X} \Cl(\mathcal{O}_{X,x}).\] Since the Cartier divisors on $X$ are exactly the locally invertible Weil divisors, the kernel of this map coincides with $\Clc(X) = \Pic(X)$ and hence
\[
\dw(X) / \dc(X) \cong \Clw(X) / \Pic(X) \subseteq \varprojlim_{x \in X} \Cl(\mathcal{O}_{X,x}).\]
\end{proof}

It is worth mentioning that, looking back to our introductory remarks on non-unique factorizations, the factor group of Weil modulo Cartier divisors of Proposition~\ref{proposition:Weil/Cartier} also appears in this context very naturally: A recent result of Geroldinger and Khadam~\cite[Theorem 4.3]{idealmonoid} shows, in particular, that it is the divisor class group of the monoid of invertible ideals of the ring of global sections of $X$, which gives a connection of our observations with the study of non-unique factorizations in this monoid.

We recover from Proposition~\ref{proposition:Weil/Cartier} the following classical result.

\begin{corollary}~\cite[Ch. II, Prop. 6.11]{Hartshorne}
Let $X$ be an integral Noetherian normal scheme, all of whose local rings are unique factorization domains. Then the groups of Weil divisors and of Cartier divisors on $X$ coincide.
\end{corollary}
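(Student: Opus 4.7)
The plan is to apply Proposition~\ref{proposition:Weil/Cartier} and observe that the hypothesis forces the target projective limit to vanish. Since $X$ is integral, Noetherian, and normal (indeed, being locally a UFD implies locally normal), Proposition~\ref{proposition:Weil/Cartier} yields an embedding
\[ \dw(X) / \dc(X) \subseteq \varprojlim_{x \in X} \Cl(\mathcal{O}_{X,x}). \]
It therefore suffices to show that each term $\Cl(\mathcal{O}_{X,x})$ of this projective system is trivial.

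For this I would invoke the definition of a UFD used throughout the paper (following Bourbaki): in the Noetherian setting, a UFD is precisely a normal domain whose divisor class group is trivial. The hypothesis that every local ring $\mathcal{O}_{X,x}$ is a UFD therefore gives $\Cl(\mathcal{O}_{X,x}) = 0$ for every $x \in X$ directly from the definition. Consequently the projective limit above is a limit of trivial groups and is itself trivial, so the embedding forces $\dw(X)/\dc(X) = 0$, which is the desired conclusion.

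There is no real obstacle here; once Proposition~\ref{proposition:Weil/Cartier} is established, the corollary is a formal consequence of the Bourbaki characterization of UFDs. Its purpose is mainly illustrative, recovering the classical Hartshorne result as the special case of the general embedding where the local class groups all vanish. If desired, one could also emphasize the parallel with the second half of the paper by noting that, under the stronger hypothesis that $X$ is locally a complete intersection, Theorem~\ref{theorem:classgroup} would allow one to replace the full projective limit by its restriction to local rings of Krull dimension at most $3$, and the same vanishing argument would then only require the UFD property in codimension at most $3$.
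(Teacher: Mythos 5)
Your argument is correct and is exactly the intended one: the paper presents this corollary as an immediate consequence of Proposition~\ref{proposition:Weil/Cartier}, with the local class groups vanishing by the Bourbaki characterization of a Noetherian UFD as a normal domain with trivial divisor class group. Nothing is missing.
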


We use our description of the class group of a local complete intersection ring (Theorem \ref{theorem:classgroup}) to immediately obtain the following corollary. Here we call $X$ locally a complete intersection if all local rings are complete intersections. 

\begin{corollary}
Let $X$ be an integral Noetherian normal scheme and suppose that $X$ is locally a complete intersection. Then the factor group $\dw(X)/ \dc(X)$ is a subgroup of
\[ \varprojlim_{\substack{x \in X \\ \dim(\mathcal{O}_{X,x}) \leq 3}} \Cl(\mathcal{O}_{X,x}), \]
the projective limit of the divisor class groups of the local rings of Krull dimension at most $3$ of~$X$.
\end{corollary}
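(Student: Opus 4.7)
The plan is to chain the two embeddings already at hand. By Proposition~\ref{proposition:Weil/Cartier}, $\dw(X)/\dc(X)$ embeds into the full projective limit $\varprojlim_{x \in X} \Cl(\mathcal{O}_{X,x})$, so it suffices to show that the natural restriction morphism
\[ \pi \colon \varprojlim_{x \in X} \Cl(\mathcal{O}_{X,x}) \longrightarrow \varprojlim_{\substack{x \in X \\ \dim \mathcal{O}_{X,x} \leq 3}} \Cl(\mathcal{O}_{X,x}) \]
is injective; composing with $\pi$ will then yield the desired embedding.

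To prove this, I would take a family $(\xi_z)_{z \in X}$ in the kernel of $\pi$ and show that $\xi_x = 0$ for every $x \in X$ with $\dim \mathcal{O}_{X,x} \geq 4$ (the remaining cases being built into the hypothesis on the kernel). Fix such an $x$. Since $X$ is normal and locally a complete intersection, $\mathcal{O}_{X,x}$ is a normal local complete intersection domain, and Theorem~\ref{theorem:classgroup} applied to $A = \mathcal{O}_{X,x}$ provides an injection
\[ \Cl(\mathcal{O}_{X,x}) \hookrightarrow \varprojlim_{\hei(p) \leq 3} \Cl\bigl((\mathcal{O}_{X,x})_p\bigr). \]
The crucial observation is that the prime ideals $p \subseteq \mathcal{O}_{X,x}$ of height at most $3$ correspond bijectively to the generizations $y$ of $x$ in $X$ with $\dim \mathcal{O}_{X,y} \leq 3$, via $y \mapsto p_y$, and under this correspondence $(\mathcal{O}_{X,x})_{p_y} = \mathcal{O}_{X,y}$. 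The projective system of Remark~\ref{remark:limit} attached to $\mathcal{O}_{X,x}$ is therefore a subsystem of the global one indexed by $X$, so the compatibility of the family $(\xi_z)_z$ maps $\xi_x$ componentwise to the subfamily $(\xi_y)_y$, each entry of which vanishes by assumption. The injectivity in Theorem~\ref{theorem:classgroup} then forces $\xi_x = 0$.

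I do not foresee a genuine obstacle here: the argument is essentially the composition of the two preceding embeddings. The only point requiring care is the compatibility between the ``global'' projective system on $X$ and the ``local'' one appearing in Remark~\ref{remark:limit} at a single point, but both are built from the same Nagata restriction maps of Fact~\ref{remark:Nagata}, so this reduces to unwinding the definitions.
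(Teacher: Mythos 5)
Your argument is correct and is exactly the route the paper intends: it combines the embedding of Proposition~\ref{proposition:Weil/Cartier} with the pointwise injectivity from Theorem~\ref{theorem:classgroup}, using the standard identification of height~$\leq 3$ primes of $\mathcal{O}_{X,x}$ with generizations $y$ of $x$ having $\dim\mathcal{O}_{X,y}\leq 3$ and the compatibility of the Nagata restriction maps. The paper states the corollary as an immediate consequence without writing out these details, so your proposal simply makes the same deduction explicit.
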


\section*{Acknowledgements} 
I want to thank Victor Fadinger-Held and Alfred Geroldinger for interesting discussions on the topic. I am also very grateful to Salvatore Tringali and to the anonymous referee for their helpful comments on the presentation of the manuscript.

\bibliographystyle{amsplainurl}
\bibliography{bibliography}
 
\vspace{1cm} 
 
\noindent
\textsc{Daniel Windisch\\
School of Mathematics, University of Edinburgh, James Clerk Maxwell Building, Peter Guthrie Tait Road, EH9 3FD,  Edinburgh, United Kingdom} \\
\textit{E-mail address}: \texttt{Daniel.Windisch@ed.ac.uk}

\end{document}